\newtheorem{Thm}{Theorem}[section]
\newtheorem{Lemm}{Lemma}[section]
\theoremstyle{definition}
\let\le=\leqslant
\let\leq=\leqslant
\let\geq=\geqslant
\begin{document}
\baselineskip 12pt

\begin{center}
\textbf{\Large On the Asymptotics of the Connectivity Probability \\ of Random Bipartite Graphs} \\

\vspace{1.5cc}
{ B.B. Chinyaev$^{1}$}\\
\vspace{0.3 cm}

{\small $^{1}$M.V. Lomonosov Moscow State University, bchinyaev.msu@gmail.com }
\end{center}
\vspace{1.0cc}

\begin{abstract}
In this paper, we analyze the exact asymptotic behavior of the connectivity probability in a random binomial bipartite graph $G(n,m,p)$ under various regimes of the edge probability $p=p(n)$. To determine this probability, a method based on the analysis of inhomogeneous random walks is proposed.

\vspace{0.95cc}
\parbox{24cc}{{\it \textbf{Keywords:} random bipartite graph, connectivity probability, \\ exact asymptotics, random walks}
}
\end{abstract}

\section{Introduction} \label{intro}

The random bipartite graph model \( G(n,m,p) \) is frequently studied in the theory of random graphs, as well as in various applied problems in combinatorics, probability theory, and computer science. In this model, the vertex set is partitioned into two parts, \( V_1 = \{1, \dotsc, n\} \) and \( V_2 = \{1, \dotsc, m\} \), and each edge connecting a vertex in \( V_1 \) with a vertex in \( V_2 \) is included independently with probability \( p = p(n,m) \). This model serves as a natural generalization of the classical Erdős–Rényi graph \( G(n,p) \) to the bipartite setting.

For the classical $G(n,p)$ model, several asymptotic results concerning the connectivity probability have been established (see, e.g., \cite{stepanov1970probability}). These results provide a detailed description of the behavior of the connectivity probability as $p(n)$ tends to zero at various rates. In our recent work \cite{chinyaev2024er_eng}, a novel approach based on the theory of inhomogeneous random walks was proposed to investigate these asymptotics, thereby rederiving known formulas and unveiling new results. Building on this approach, in \cite{chinyaev2025method} we developed an efficient method for sampling connected $G(n,p)$ and $G(n,m)$ random graphs.

The aim of this paper is to analyze the asymptotic behavior of the connectivity probability $P_{n,m}(p)$ for the random bipartite graph $G(n,m,p)$ by employing a methodology similar to that used in our previous work \cite{chinyaev2024er_eng} on Erdős–Rényi graphs. During the investigation, we derived a novel and convenient nonasymptotic representation of the connectivity probability in terms of a sequential graph exploration process. This representation, which is applicable regardless of the relations among the parameters $n$, $m$, and $p(n,m)$, is discussed in detail in Section~\ref{main_lem}. The analysis of the asymptotics of the derived expression is reduced to the study of a process which, although not a inhomogeneous random walk as in the previous case, is analyzed using methods developed for inhomogeneous walks.

In the present version of the paper, we prove a key lemma that reduces the analysis of the connectivity probability of the bipartite random graph $G(n,m,p)$ to the study of a process constructed from two inhomogeneous random walks. This representation enables a detailed asymptotic analysis, and we formulate a theorem describing the exact asymptotics behavior of the connectivity probability under various regimes of $p = p(n,m)$ as $n, m \to \infty$. The full proof of the theorem will be included in a future version, while the present results have independent interest and provide a foundation for further work.

\section{Preliminaries} \label{prelim}

In our previous work \cite{chinyaev2024er_eng}, we established a connection between the connectivity probability of the graph $G(n,p)$ and the probability that a certain inhomogeneous Poisson random walk remains nonnegative, as formalized in the following lemma.
\begin{Lemm}[\cite{chinyaev2024er_eng}]\label{lem_Gnp}
Let $G(n,p)$ be an Erdős–Rényi graph. Then the connectivity probability is given by
$$
P_n (p) = \left(1-(1-p)^{n}\right)^{n-1} {\bf P}(S_k \geq 0, \ 0< k < n| S_{n} = -1 ), 
$$
where  $S_k = \sum_{i=1}^{k} X_i$, and $X_i$ are independent random variables such that $X_i + 1\sim Poiss\left(\lambda_i\right)$, with 
 $$\lambda_i = \frac{np}{1-(1-p)^{n}}(1-p)^{(i-1)}.$$
\end{Lemm}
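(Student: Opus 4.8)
The plan is to prove the identity directly by analyzing a breadth-first exploration of the component of a fixed root vertex, and then to match the resulting combinatorial sum against the conditioned Poisson walk by elementary algebra. Fix vertex $1$ as the root and process vertices in discovery order, one per step. At step $j$ I reveal all edges from the $j$-th processed vertex to the vertices not yet discovered, letting $n_j$ denote the number of newly discovered vertices at that step. Writing $S_k=\sum_{i=1}^k(n_i-1)$, the active set after $k$ steps has size $1+S_k$, so the exploration neither terminates prematurely nor overshoots exactly when $S_k\ge 0$ for $0<k<n$ and $S_n=-1$; this event coincides with $G(n,p)$ being connected, since it forces all $n-1$ non-root vertices to be reached. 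This identifies the required walk structure with $X_i=n_i-1$.

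The key step is to compute the probability of a fixed discovery pattern, that is, of the ordered assignment of the $n-1$ non-root vertices to the steps $1,\dots,n$ with prescribed block sizes $n_1,\dots,n_n$. The crucial observation is that the edges inspected at distinct steps are pairwise disjoint (at step $j$ only edges from the current vertex to the current pool of undiscovered vertices are examined, and such a vertex leaves the pool immediately upon discovery), while all edges internal to the already-discovered set are never inspected and therefore contribute a factor $1$ once summed out. Hence, letting $U_j=n-1-\sum_{i<j}n_i$ be the pool size before step $j$, each pattern has probability $\prod_j p^{\,n_j}(1-p)^{U_j-n_j}=p^{\,n-1}(1-p)^{\sum_j(U_j-n_j)}$, and a short telescoping computation gives $\sum_j(U_j-n_j)=\sum_i (i-1)n_i$. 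Multiplying by the multinomial count $\binom{n-1}{n_1,\dots,n_n}$ of patterns with given block sizes and summing over admissible sequences yields the exact identity
\begin{equation*}
P_n(p)=p^{\,n-1}(n-1)!\sum_{(n_i)}\frac{(1-p)^{\sum_i(i-1)n_i}}{\prod_i n_i!},
\end{equation*}
where the sum ranges over $(n_1,\dots,n_n)$ with $\sum_i n_i=n-1$ and all partial sums $\sum_{i\le k}n_i\ge k$ for $0<k<n$.

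It remains to recognize the right-hand side as the stated conditioned walk. With $\lambda_i=\frac{np}{1-(1-p)^n}(1-p)^{i-1}$ one checks $\sum_{i=1}^n\lambda_i=n$, so $\sum_{i=1}^n(X_i+1)\sim\mathrm{Poiss}(n)$ and consequently $\mathbf P(S_n=-1)=e^{-n}n^{n-1}/(n-1)!$. Expanding the independent Poisson weights, the constraint $S_n=-1$ fixes $\sum_i n_i=n-1$, the factor $\prod_i\lambda_i^{n_i}$ contributes $\bigl(\tfrac{np}{1-(1-p)^n}\bigr)^{n-1}(1-p)^{\sum_i(i-1)n_i}$, and dividing by $\mathbf P(S_n=-1)$ cancels $e^{-n}$ and $n^{n-1}/(n-1)!$. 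Using $\bigl(1-(1-p)^n\bigr)^{n-1}\cdot\bigl(\tfrac{np}{1-(1-p)^n}\bigr)^{n-1}=(np)^{n-1}$ then reproduces the displayed identity term by term, completing the argument. The main obstacle is the justification in the second paragraph that the per-step randomness is genuinely independent and that the uninspected edges factor out cleanly; once the disjointness of the inspected edge sets is established, the computation of $\sum_j(U_j-n_j)$ and the Poisson bookkeeping are routine.
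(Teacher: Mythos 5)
Your proposal is correct and follows essentially the same route as the paper: the exploration process yields the exact combinatorial sum $p^{n-1}(n-1)!\sum(1-p)^{\sum_i(i-1)n_i}/\prod_i n_i!$ over admissible block-size sequences, which is then matched to the conditioned Poisson walk by the same choice of intensities $\lambda_i$ and the observation $\sum_i\lambda_i=n$ (this is precisely the argument the paper carries out for the bipartite analogue in Lemma~\ref{connect}, applied twice there to the left and right parts). Your telescoping identity $\sum_j(U_j-n_j)=\sum_i(i-1)n_i$, the multinomial count, and the cancellation against $\mathbf{P}(S_n=-1)=e^{-n}n^{n-1}/(n-1)!$ all check out.
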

\noindent The proof of this lemma was based on the analysis of the graph exploration process. In this paper, we consider a similar process for the bipartite graph case.

\subsection{Exploration process in the Bipartite Graph} 

To study the connectivity of the random bipartite graph $G(n, m, p)$, we use exploration process (see in \cite{zakharov2023asymptotic}) analogous to that used for $G(n,p)$ (see  \cite{karp1990transitive}, \cite{nachmias2010critical}). Initially, all vertices are inactive except for one designated vertex from the left set, which is activated.  At each step, a vertex is selected from the set of activated but not yet processed vertices. If this set contains any vertices from the right part, the earliest activated one is processed; otherwise, the earliest activated vertex from the left part is selected.

When the $i$th activated vertex from the left part is processed, all of its inactive neighbors in the right part are activated. The number of such neighbors is denoted by $R_i$. We denote by $S^R_k$ the total number of vertices in the right part that have been activated after processing the first $k$ vertices from the left part. Similarly, when the $j$th activated vertex from the right part is processed, all of its inactive neighbors in the left part are activated. We denote their number by $L_j$, while $S^L_k$ denotes the total number of left vertices activated after processing the first $k$ vertices from the right set. Then,

\begin{equation}
S^R_{i+1} = S^R_i + R_i , \ i< n, \ \ \ S^L_{j+1} = S^L_j + L_j, \ j< m.
\end{equation}
Since the edges in \(G(n,m,p)\) occur independently with probability \(p\), the following holds:
\begin{equation}
	\mathbf{P}\left(R_i = k  \ | S^R_{i}=l\right) = 
	\left\{\begin{array}{ll}
            \binom{m-l}{k} \  p^k (1-p)^{m-l}, &  i \le S^L_l, \\
            0, &  i > S^L_l.\\
        \end{array} \right.
    \label{BReq}
\end{equation}

\begin{equation}
	\mathbf{P}\left(L_j = k  \ | S^L_{j}=l\right) = 
	\left\{\begin{array}{ll}
            \binom{n-1-l}{k} \ p^k (1-p)^{m-l}, &  j \le S^R_{l}, \\
            0, &  j > S^R_{l}.\\
        \end{array} \right.
\end{equation}
It is convenient to introduce \(S^*_k\), the number of active non-processed left vertices remaining after processing the \(S^R_k\) right vertices. That is, 
\begin{equation}
S^*_{k} = S^L_{S^R_k} - k, \ k\le n.
\end{equation}
The graph is connected if and only if the exploration process continues until every vertex is activated; that is, \(S^*_k>0\) for \(k=1,2,\dotsc,n-1\) and \(S^R_n = m\). Consider the trajectories of the exploration process, denoted by \(\{R_i, L_j\}\). Let
\[
\mathbf{r}=(r_1,\dotsc,r_n) \quad \text{and} \quad \mathbf{l}=(l_1,\dotsc,l_m)
\]
be the observed sequences of \(\{R_i\}\) and \(\{L_j\}\), respectively.
In order for \(G\) to be connected, the sequences \(\mathbf{r}\) and \(\mathbf{l}\) must satisfy
\begin{equation}
T_{n,m} =\left\{\mathbf{r} ,\mathbf{l}  :\sum _{i=1}^{n} r_{i} =m, \ \sum _{j=1}^{\sum _{i=1}^{k}r_i} l_{j} \geqslant k, \ k< n, \ \sum _{j=1}^{n} l_{j} =n-1\right\},
\end{equation}
Thus, the connectivity probability can be expressed as follows:
\begin{equation}\label{Pcon}
 \begin{aligned}
 P_{n,m} (p)=\sum\limits _{(\mathbf{r} ,\mathbf{l} )\in T_{n,m}}  \left( \prod_{i=1}^{n} C_{m-r_{1} -\dotsc -r_{i-1}}^{r_{i}} p^{r_{i}} (1-p)^{r_{i+1} +\dotsc +r_{n}}\right)  \times 
 \\
\times \left( \prod _{j=1}^{m} C_{n-1-l_{1} -\dotsc -l_{j-1}}^{l_{j}} p^{l_{j}} (1-p)^{l_{j+1} +\dotsc +l_{m}}\right) . 
\end{aligned}
\end{equation}

\subsection{Connectivity Probability and Random Walks}\label{main_lem}

The following lemma provides a reformulation of the connectivity probability into a more analytically tractable form.

\begin{Lemm}[Connectivity Probability of $G(n,m,p)$]\label{connect}
Let $G(n,m,p)$ be a random binomial bipartite graph. Let \(X_i\) and \(Y_j\) be independent random variables satisfying
\begin{align} \label{ABS_def1}
\begin{split}
X_i\sim Poiss\left(\alpha_i\right), \quad
\alpha_i = \frac{mp}{1-(1-p)^{n}}(1-p)^{i-1}, \quad i\leq n, \\
Y_j\sim Poiss\left(\beta_j\right), \quad 
\beta_j = \frac{np}{1-(1-p)^{m}}(1-p)^{j-1}, \quad j\leq m. 
\end{split}
\end{align}
Define the processes
\begin{equation} \label{ABS_def3}
    A_k = \sum_{i=1}^{k} X_i, \ \ B_k = \sum_{j=1}^{k} Y_j, \ \ S_{k} = B_{A_k} - k.
\end{equation}
Then, the connectivity probability of $G(n,m,p)$ can be expressed as
\begin{align}\label{main_lem_res}
\begin{split}
P_{n,m}(p) = \left(1-(1-p)^{n}\right)^{m} \left(1-(1-p)^{m}\right)^{n-1} \\
\times \
{\bf P}(S_k \geq 0, 0< k < n \ |\ A_{n} = m, B_{m} = n-1 ).
\end{split}
\end{align}
\end{Lemm}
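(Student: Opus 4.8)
The plan is to start directly from the combinatorial expression (\ref{Pcon}) for $P_{n,m}(p)$ and turn each of its two products into a conditioned Poisson probability. First I would simplify the ``right'' product $\prod_{i=1}^n \binom{m - r_1 - \cdots - r_{i-1}}{r_i} p^{r_i}(1-p)^{r_{i+1}+\cdots+r_n}$. Writing $M_i = m - r_1 - \cdots - r_{i-1}$ and using $\binom{M_i}{r_i} = M_i!/(r_i!\,M_{i+1}!)$, the binomial coefficients telescope; since $\sum_{i=1}^n r_i = m$ on $T_{n,m}$ (so $M_{n+1}=0$), their product collapses to the multinomial coefficient $m!/\prod_i r_i!$. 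Collecting the powers of $p$ and $1-p$ (here $\sum_{i=1}^n(r_{i+1}+\cdots+r_n) = \sum_{i=1}^n (i-1) r_i$), the right product becomes $\frac{m!}{\prod_i r_i!}\prod_{i=1}^n\bigl(p(1-p)^{i-1}\bigr)^{r_i}$. The same manipulation applied to the ``left'' product, now using $\sum_{j=1}^m l_j = n-1$, yields $\frac{(n-1)!}{\prod_j l_j!}\prod_{j=1}^m\bigl(p(1-p)^{j-1}\bigr)^{l_j}$.

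Next I would match these to the Poisson laws. The joint pmf of $(X_1,\dots,X_n)$ at $(r_1,\dots,r_n)$ is $e^{-\sum\alpha_i}\prod_i\alpha_i^{r_i}/r_i!$, and summing the geometric series gives $\sum_{i=1}^n\alpha_i=m$, so $A_n\sim\mathrm{Poiss}(m)$ and $\mathbf P(A_n=m)=e^{-m}m^m/m!$. Dividing the joint pmf by $\mathbf P(A_n=m)$ and comparing with the simplified right product, I would check that their ratio is the constant $\bigl(1-(1-p)^n\bigr)^{m}$ — all dependence on $\mathbf r$ cancels — so that the right product equals $\bigl(1-(1-p)^n\bigr)^{m}\,\mathbf P(X_i=r_i\,\forall i\mid A_n=m)$. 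The left product is handled identically, except that $\sum_{j=1}^m\beta_j=n$ while the conditioning is on $B_m=n-1$ (one fewer than the mean, because the single seed vertex in $V_1$ need not be re-activated); this asymmetry is exactly what produces the exponent $n-1$ and the factor $\bigl(1-(1-p)^m\bigr)^{n-1}\,\mathbf P(Y_j=l_j\,\forall j\mid B_m=n-1)$.

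Finally, since the $X_i$ and $Y_j$ are independent and the two conditioning events act on disjoint blocks of variables, the product of the two conditional probabilities equals the joint conditional probability $\mathbf P(X=\mathbf r,\,Y=\mathbf l\mid A_n=m,\,B_m=n-1)$. Pulling the two constant prefactors out of the sum over $T_{n,m}$, it remains to sum these joint conditional probabilities over $T_{n,m}$, and here I would observe that the three defining constraints split cleanly: $\sum_i r_i=m$ and $\sum_j l_j=n-1$ are precisely the conditioning events $\{A_n=m\}$ and $\{B_m=n-1\}$, while the remaining constraint $\sum_{j=1}^{\sum_{i\le k}r_i}l_j\ge k$ for $k<n$ is, since $A_k=\sum_{i\le k}r_i$ and $B_{A_k}=\sum_{j\le A_k}l_j$, exactly the event $\{S_k\ge 0,\ 0<k<n\}$ (note $A_k\le m$ on $\{A_n=m\}$, so $B_{A_k}$ is always well defined). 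The summation therefore collapses to $\mathbf P(S_k\ge0,\ 0<k<n\mid A_n=m,\,B_m=n-1)$, giving (\ref{main_lem_res}).

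I expect the only delicate points to be bookkeeping rather than conceptual: keeping the two telescopings straight, and correctly tracking the asymmetric conditioning $B_m=n-1$ against the mean $n$, which is precisely what distinguishes the two normalizing prefactors. The conceptual content — that sequential binomial depletion conditioned on full exploration is a conditioned product of independent Poissons — is the same mechanism as in Lemma~\ref{lem_Gnp}, here applied twice and coupled only through the single order constraint that becomes the walk-positivity event.
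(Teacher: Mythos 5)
Your proposal is correct and follows essentially the same route as the paper: it telescopes the binomial coefficients in (\ref{Pcon}), rewrites each product as a conditioned product of independent Poisson variables with the intensities $\alpha_i$, $\beta_j$ (using $A_n\sim Poiss(m)$ and $B_m\sim Poiss(n)$, with the asymmetric conditioning $B_m=n-1$ producing the exponent $n-1$), and identifies the sum over $T_{n,m}$ with ${\bf P}(S_k \geq 0,\ 0<k<n \mid A_n=m,\ B_m=n-1)$. The only difference is organizational: you normalize by ${\bf P}(A_n=m)$ and ${\bf P}(B_m=n-1)$ factor by factor, whereas the paper pulls out the constants $m!\,e^m/m^m$ and $(n-1)!\,e^n/n^{n-1}$ and divides at the end; the underlying computation is identical.
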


\begin{proof}
We now transform expression (\ref{Pcon}). The terms under the summation can be written as follows
\begin{align}
\prod _{i=1}^{n} C_{m-r_{1} -\dotsc -r_{i-1}}^{r_{i}} p^{r_{i}} (1-p)^{r_{i+1} +\dotsc +r_{n}} =p^{m} m!\prod _{i=1}^{n}\left(\frac{(1-p)^{(i-1)r_{i}}}{r_{i} !}\right), \label{mainlem_prod1}\\
\prod _{j=1}^{m} C_{n-1-l_{1} -\dotsc -l_{j-1}}^{l_{j}} p^{l_{j}} (1-p)^{l_{j+1} +\dotsc +l_{m}} =p^{n-1}( n-1) !\prod _{j=1}^{m}\left(\frac{(1-p)^{(j-1)l_{j}}}{l_{j} !}\right) . \label{mainlem_prod2}
\end{align}
For $\displaystyle q >0$, the product in (\ref{mainlem_prod1}) can be rewritten as
\begin{equation*}
\exp\left( q\sum _{i=1}^{n} (1-p)^{i-1}\right) q^{-m}\prod _{i=1}^{n}\left(\exp\left( -q(1-p)^{i-1}\right)\frac{q^{r_{i}} (1-p)^{(i-1)r_{i}}}{r_{i} !}\right) .
\end{equation*}
Let
\begin{equation*}
q=\frac{mp}{1-(1-p)^{n}} =m\left(\sum _{i=1}^{n} (1-p)^{i-1}\right)^{-1} .
\end{equation*}
Then the expression (\ref{mainlem_prod1}) becomes
\begin{equation*}
m!\exp( m)\left(\frac{1-(1-p)^{n}}{m}\right)^{m}\prod _{i=1}^{n}\exp( -\alpha _{i})\frac{\alpha _{i}^{r_{i}}}{r_{i} !} , \quad\alpha _{i} =\frac{mp}{1-(1-p)^{n}} (1-p)^{i-1}.
\end{equation*}
Applying similar transformations to the product in~\eqref{mainlem_prod2}, we obtain the following expression:
\begin{equation*}
( n-1) !\exp( n)\left(\frac{1-(1-p)^{m}}{n}\right)^{n-1}\prod _{j=1}^{m}\exp( -\beta _{j})\frac{\beta _{j}^{l_{j}}}{l_{j} !} ,\ \ \beta _{j} =\frac{np}{1-(1-p)^{m}} (1-p)^{j-1} .
\end{equation*}
Therefore, the expression (\ref{Pcon}) is transformed into the form
\begin{equation}
\begin{gathered}
 P_{n,m} (p)=
\left( 1-(1-p)^{n}\right)^{m}\left( 1-(1-p)^{m}\right)^{n-1} \frac{m!\exp( m)}{m^{m}}\frac{( n-1) !\exp( n)}{n^{n-1}}\times \\
\times \sum\limits _{(\mathbf{r} ,\mathbf{l} )\in T_{n,m}}\left(\prod _{i=1}^{n}\exp( -\alpha _{i})\frac{\alpha _{i}^{r_{i}}}{r_{i} !}\right)\left(\prod _{j=1}^{m}\exp( -\beta _{j})\frac{\beta _{j}^{l_{j}}}{l_{j} !}\right). 
\end{gathered}
\end{equation}
Note that if $X_{i} \sim Poiss(\alpha _{i} )$ and $Y_{j} \sim Poiss(\beta _{j} )$, then 
\begin{equation*}
\exp( -\alpha _{i})\frac{\alpha _{i}^{r_{i}}}{r_{i} !} ={\bf P}(X_{i} =r_{i} ), \ \ \exp( -\beta _{j})\frac{\beta _{j}^{l_{j}}}{l_{j} !} ={\bf P}(Y_{j} =l_{j} ).
\end{equation*}
Combining the expressions above, we obtain
\begin{equation}
\begin{gathered}
 P_{n,m} (p) = \left( 1-(1-p)^{n}\right)^{m}\left( 1-(1-p)^{m}\right)^{n-1}{\bf P}(A_{n} = m, B_{m} = n-1 )^{-1} \\
\times {\bf P}(S_k \geq 0, 0< k < n, \ A_{n} = m, B_{m} = n-1 ).
\end{gathered}
\end{equation}
Hence, the desired form (\ref{main_lem_res}) immediately follows.
\end{proof}
The lemma above serves as a bipartite analogue of Lemma~\ref{lem_Gnp}. It provides a tool for analyzing the connectivity probability of the graph \( G(n,m,p) \) under various regimes of the parameter \( p \). To this end, we aim to compute the probability that the random process \( S_k \) remains nonnegative throughout, conditioned on returning to \( -1 \) at the final step. We begin by calculating the expected values of \( A_k \), \( B_k \), and \( S_k \).
\begin{equation}\label{mu_eta}
\mu _{k} =\mathds{E} A_{k} =\sum _{j=1}^{k} \alpha _{j}  =m \frac{1-( 1-p)^{k}}{1-( 1-p)^{n}}, \quad
\eta _{k}  = \mathds{E} B_{k} =\sum _{i=1}^{k} \beta_i =n \frac{1-( 1-p)^{k}}{1-( 1-p)^{m}} , 
\end{equation}
\begin{gather*}
\mathds{E} S_{k}  = \sum _{i=0}^{\infty }\mathbf{P}( A_{k} =i)  \eta _{i}  =\sum _{i=0}^{\infty } e^{-\mu _{k}}\frac{\mu _{k}^{i}}{i!} \ n \frac{1-( 1-p)^{i}}{1-( 1-p)^{m}} = \\
=\frac{n}{1-( 1-p)^{m}}\left( 1-e^{-\mu _{k} p}\right) .
\end{gather*}
Note that the increments of the process \( \{S_k\} \) initially have a positive expected value, but this expectation decreases with \( k \) and eventually becomes negative.
\begin{figure}[H]
    \centering
    \includegraphics[width=0.8\linewidth, height= 0.4\linewidth]{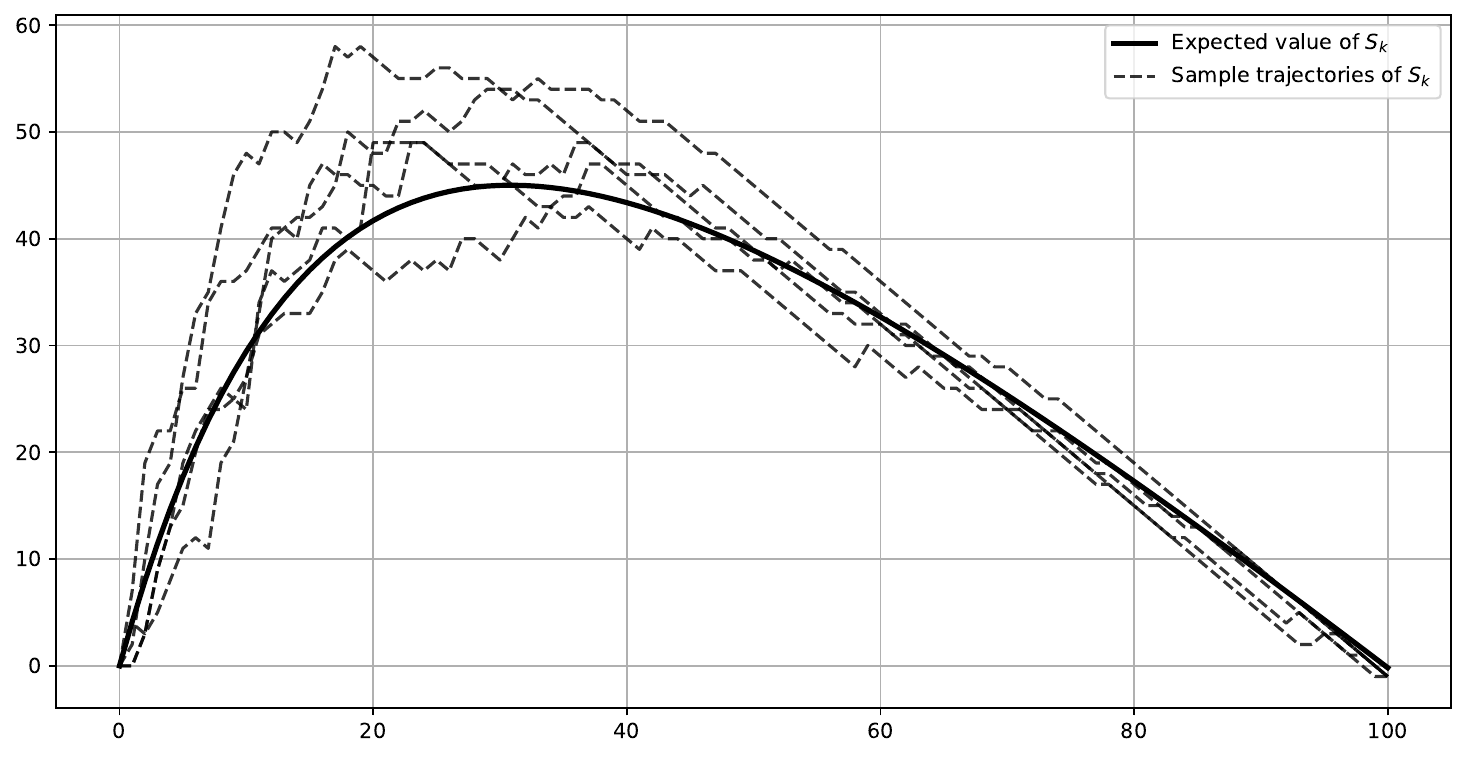}  
    \caption{Plot of the expected value and sample realizations of $S_k$.}
\end{figure}\label{ex1}
\noindent It is convenient to consider the inhomogeneous recovery process
\begin{equation*}
V_{k}^{A}  = \sum _{i=1}^{\infty } I( A_{i} \leqslant k).
\end{equation*}
Then the nonnegativity condition for \( S_k \) can be equivalently expressed as
\begin{equation*}
\left\{S_{k} =B_{A_k} -k\geqslant 0,\ k \le n\ \right\} \Leftrightarrow  \left\{B_{k} -V_{k}^{A} \geqslant  0,\ k \le n \right\}.
\end{equation*}
\begin{figure}[H]
    \centering
    \includegraphics[width=0.8\linewidth, height= 0.4\linewidth]{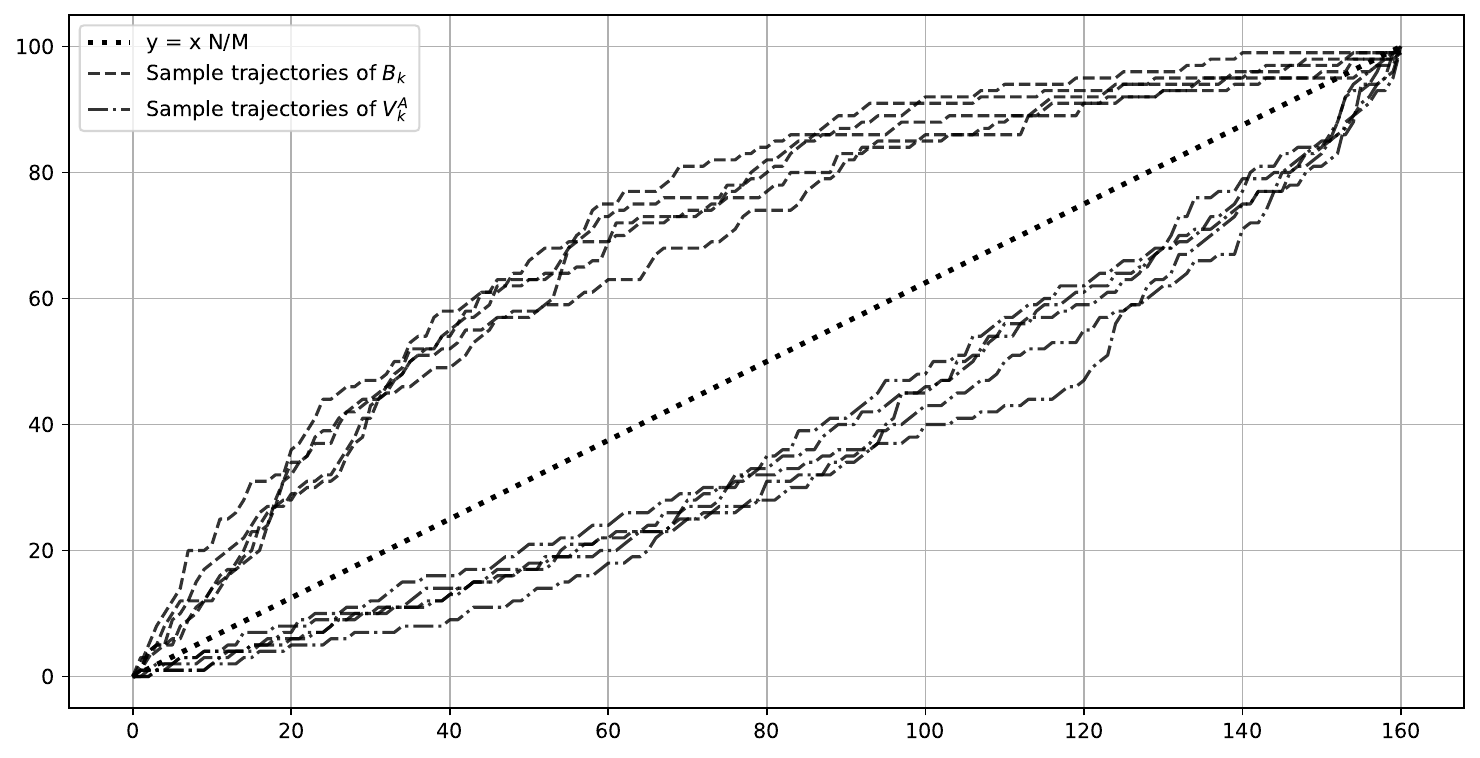}  
    \caption{Plot of sample realizations of $B_k$ and $V_{k}^{A}.$}
\end{figure}\label{ex2}

\noindent As observed from the simulations, the trajectories of \( B_k \) typically lie above those of \( V_k^A \). Note that neither of them even crosses the line $y(k) = k n/m$. We make use of this observation by formally demonstrating the 
rarity of this event in the proof of the main result.
\newpage
We show that the conditional probability in (\ref{main_lem_res}) is monotonic with respect to~$p$.
\begin{Lemm}[Monotonicity] 
Let $0<p_1<p_2<1$ and let the processes $\{A^i_k,B^i_k,S^i_k\}$ be defined by (\ref{ABS_def1}) and (\ref{ABS_def3}) for the corresponding $p=p_i$. Then
\begin{align}
\begin{split}
&{\bf P}(S^1_k \geq 0, 0< k < n| A^1_{n} = m, \ B^1_{m} = n-1 )\le 
\\
\le \ &{\bf P}(S^2_k \geq 0, 0< k < n| A^2_{n} = m, \ B^2_{m} = n-1)\leq 1. \label{compare_lem_res}
\end{split}
\end{align}
\end{Lemm}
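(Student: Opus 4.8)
The plan is to strip off the conditioning, recast everything as a balls-in-boxes model, and then build an order-preserving coupling between the two values of~$p$. The first step is to remove the conditioning. Since $X_1,\dots,X_n$ are independent Poisson, conditioning on $A_n=\sum_i X_i=m$ makes $(X_1,\dots,X_n)$ a multinomial vector with $m$ trials and cell probabilities $\pi_i(p)=\alpha_i/\sum_l\alpha_l=p(1-p)^{i-1}/(1-(1-p)^n)$; likewise, conditioning on $B_m=n-1$ makes $(Y_1,\dots,Y_m)$ multinomial with $n-1$ trials and cell probabilities $\rho_j(p)=p(1-p)^{j-1}/(1-(1-p)^m)$. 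Equivalently, I place $m$ independent \emph{left balls} into boxes $\{1,\dots,n\}$ (box $i$ chosen with probability $\pi_i(p)$) and $n-1$ independent \emph{right balls} into boxes $\{1,\dots,m\}$ (box $j$ chosen with probability $\rho_j(p)$); then $X_i,Y_j$ are the occupancies, $A_k,B_k$ the prefix sums, and the conditional probability in \eqref{compare_lem_res} equals $\mathbf P\bigl(B_{A_k}\ge k,\ 0<k<n\bigr)$ in this model. Both $\pi(p)$ and $\rho(p)$ are truncated geometric laws.

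The key monotonicity is that the per-ball box index $I$ is stochastically decreasing in $p$. Indeed, for $p_1<p_2$ the likelihood ratio $\pi_i(p_2)/\pi_i(p_1)\propto\bigl((1-p_2)/(1-p_1)\bigr)^{\,i-1}$ is nonincreasing in $i$, so the family is ordered by monotone likelihood ratio and hence stochastically: $\mathbf P_{p_2}(I\le k)\ge\mathbf P_{p_1}(I\le k)$ for every $k$, and the same holds for $\rho$. I would then couple the two systems ball by ball via the standard quantile (inverse-CDF) coupling, independently across balls and across the left and right families, so that on a common probability space each ball of the $p_2$-system sits in a box of index at most that of the corresponding ball of the $p_1$-system. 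Writing superscripts $(1),(2)$ for the two systems, this coupling yields, simultaneously for all $k$,
\begin{equation*}
A^{(2)}_k \ge A^{(1)}_k, \qquad B^{(2)}_k \ge B^{(1)}_k,
\end{equation*}
since any ball counted in a prefix $\{1,\dots,k\}$ of the $p_1$-system is also counted in that prefix of the $p_2$-system.

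It remains to propagate these inequalities through the composed process $S_k=B_{A_k}-k$. Using that each $B^{(\cdot)}$ is nondecreasing in its index (a prefix sum of nonnegative occupancies) and that $A^{(\cdot)}_k\le m$ always, I obtain the chain
\begin{equation*}
B^{(2)}_{A^{(2)}_k}\ \ge\ B^{(2)}_{A^{(1)}_k}\ \ge\ B^{(1)}_{A^{(1)}_k},
\end{equation*}
where the first inequality uses $A^{(2)}_k\ge A^{(1)}_k$ with monotonicity of $B^{(2)}$ in its index, and the second uses the pointwise domination $B^{(2)}\ge B^{(1)}$. Hence $S^{(2)}_k\ge S^{(1)}_k$ for all $k$, so on the coupling the event $\{S^{(1)}_k\ge0,\ 0<k<n\}$ is contained in $\{S^{(2)}_k\ge0,\ 0<k<n\}$. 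Taking probabilities gives the left inequality in \eqref{compare_lem_res}, and the bound by $1$ is immediate.

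The main obstacle is this composition step: because $S_k$ is $B$ evaluated at the random index $A_k$, pointwise domination of the marginals alone is insufficient; one must combine the domination of the inner process $A$ with \emph{both} the pointwise domination and the index-monotonicity of the outer process $B$, which is precisely why an order-preserving coupling, rather than a mere stochastic comparison of the $X_i$ and $Y_j$, is required. A secondary point needing care is verifying that the quantile coupling is consistent with the fixed, identical total ball counts $m$ and $n-1$ in both systems, so that the coupled marginals are genuinely the conditional laws appearing in \eqref{compare_lem_res}.
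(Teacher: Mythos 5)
Your proof is correct, and it follows the same overall strategy as the paper --- construct an order-preserving coupling under which $A^{(2)}_k\ge A^{(1)}_k$ and $B^{(2)}_k\ge B^{(1)}_k$ pointwise with coinciding endpoints, then propagate through the composition $S_k=B_{A_k}-k$ to get an event inclusion --- but the coupling itself is built by a genuinely different device. The paper keeps the conditioning in place and embeds both systems into common unit-rate Poisson processes $N^A,N^B$ via the time changes $\mu^i_k,\eta^i_k$; since $\mu^i_n=m$ and $\eta^i_m=n$ for both values of $p$, the conditioning events $\{N^A_m=m,\ N^B_n=n-1\}$ literally coincide on the coupled space, and the comparison of conditional probabilities reduces to the inclusion of the joint events. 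You instead eliminate the conditioning at the outset by observing that independent Poissons conditioned on their sum are multinomial, and then couple ball-by-ball via quantiles using the monotone-likelihood-ratio ordering of the truncated geometric cell probabilities. Your route has the advantage of making the treatment of the conditioning completely explicit (the totals $m$ and $n-1$ are fixed by construction, so there is no need to argue that the conditioning events agree across the two systems), and of spelling out the composition step $B^{(2)}_{A^{(2)}_k}\ge B^{(2)}_{A^{(1)}_k}\ge B^{(1)}_{A^{(1)}_k}$, which the paper leaves implicit; the paper's route is shorter and reuses the same Poisson-process representation that underlies the rest of its asymptotic analysis. Both arguments are complete and yield the stated inequality.
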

\begin{proof}
We prove this claim using a coupling argument on a single probability space. Consider Poisson processes $N_t^A$ and $N_t^B$ with intensity $1$. Then, denoting by $\mu_k^i$ and $\eta_k^i$ the quantities defined in (\ref{mu_eta}) for $p=p_i$, for $i\in \{1,2\}$, we have
\begin{equation*}
\left(A^i_k, \ k\le n\right)\stackrel{d}{=} (N_{\mu_1^i}^A,N_{\mu_2^i}^A,\dotsc, N_{\mu_n^i}^A), \ \
\left(B^i_k, \ k\le m\right)\stackrel{d}{=} (N_{\eta_1^i}^B,N_{\eta_2^i}^B,\dotsc, N_{\eta_m^i}^B).
\end{equation*}
Note that $N_{\mu_n^1}^A=N_{\mu_n^2}^A=N_{m}^A$, $N_{\eta_m^1}^B=N_{\eta_m^2}^B=N_{n}^B$, and moreover, $N_{\mu_k^2}^A$ and $N_{\eta_k^2}^B$ dominate $N_{\mu_k^1}^A$ and $N_{\eta_k^1}^B$, respectively, due to the monotonicity
\begin{equation*}
k(m/n)\leq\mu_k^1\leqslant\mu_k^2, \ k<n, \quad k(n/m)\leq\eta_k^1\leqslant\eta_k^2, \ k<m.
\end{equation*}
Therefore, in this probability space
\begin{equation*}
\left\{B_{A_k^1 } \geq k, k < n, A^1_{n} = m, B^1_{m} = n-1   \right\} \subseteq \left\{B_{A_k^2 } \geq k, k < n, A^2_{n} = m,  B^2_{m} = n-1   \right\}.
\end{equation*}
Hence, the desired claim follows.
\end{proof}

\subsection{Formulation of the Main Theorem}\label{main_theorem}

Using the representation obtained in Lemma \ref{connect}, we describe the asymptotic behavior of the connectivity probability of the graph $G(n,m,p)$ under various regimes for the parameter $p = p(n,m)$ as $n,m \to \infty$. The following theorem summarizes the obtained results.
\begin{Thm}[On the Connectivity Probability of $G(n,m,p)$]\label{main res}
Let $G(n,m,p)$ be a random binomial bipartite graph with edge probability $p = c_n/(n+m)$ and $m \sim a n$, as $n \to \infty$, where $a > 0$.
Let $P_{n,m}(p)$ be the probability that the graph $G(n,m,p)$ is connected.
\begin{enumerate}[1)]
\item 
Assume that $c_n \to +\infty$, as $n \to \infty$. Then
\begin{equation}\label{Gnm_res1}
P_{n,m}(p) \sim  \left(1-(1-p)^{n}\right)^{m} \left(1-(1-p)^{m}\right)^{n},\quad 
n\to \infty.
\end{equation}
\item
Assume that $c_n \to c \in (0,+\infty)$, as $n \to \infty$. Then
\begin{equation}
\label{Gnm_res2}
\begin{gathered}
P_{n,m}(p) \sim  (1-\alpha_n\beta_m)\left(1-(1-p)^{n}\right)^{m} \left(1-(1-p)^{m}\right)^{n}, \\
\alpha_n\sim \frac{mc}{n+m} \frac{e^{-c n/(n+m)}}{1-e^{-c n/(n+m)}}, \ \beta_m\sim \frac{nc}{n+m} \frac{e^{-c m/(n+m)}}{1-e^{-c m/(n+m)}}, \ \  n \to \infty . 
\end{gathered}
\end{equation}

\item  
Assume that $c_n=o(1)$ and that $c_n n^{1/2}/\ln n \to+\infty$, as $n\to\infty$. Then 
\begin{equation}\label{Gnm_res3}
P_{n,m}(p) \sim \frac{c_n}{2}\left(1-(1-p)^{n}\right)^{m} \left(1-(1-p)^{m}\right)^{n} ,\quad 
n\to \infty.
\end{equation}
\item 
Assume that $c_n=o(1/n)$, as $n \to \infty$. Then
\begin{equation}\label{Gnm_res4}
P_{n,m}(p) \sim \frac{1}{n} \left(1-(1-p)^{n}\right)^{m} \left(1-(1-p)^{m}\right)^{n-1} \sim n^{m-1}m^{n-1} p^{n+m-1},\ 
n\to \infty.
\end{equation}
\end{enumerate}
\end{Thm}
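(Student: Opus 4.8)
The plan is to work entirely from the representation in Lemma~\ref{connect}, so that everything reduces to the asymptotics of the conditional probability
\[
Q_{n,m} := {\bf P}\bigl(S_k \ge 0,\ 0<k<n \ \big|\ A_n=m,\ B_m=n-1\bigr),
\]
since then $P_{n,m}(p)=(1-(1-p)^n)^m(1-(1-p)^m)^{n-1}Q_{n,m}$ and each claimed asymptotic is just a statement about the prefactor $Q_{n,m}$ (namely $Q_{n,m}\sim 1-(1-p)^m$, $(1-\alpha_n\beta_m)(1-(1-p)^m)$, $\tfrac{c_n}{2}(1-(1-p)^m)$, and $\tfrac1n$ in the four regimes). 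The first structural fact I would record is that, because $A_k$ and $B$ are nondecreasing, the increment $S_{k+1}-S_k=B_{A_{k+1}}-B_{A_k}-1\ge-1$; hence $\{S_k\ge0,\,0<k<n\}$ fails \emph{iff} the first-passage time $\tau=\min\{k:S_k=-1\}$ satisfies $\tau<n$, and $1-Q_{n,m}$ is a first-passage probability that can be expanded by the location of the dip. I would also compute the normalizing denominator exactly: since $\mu_n=m$ and $\eta_m=n$ the marginals are exact Poisson, ${\bf P}(A_n=m)=e^{-m}m^m/m!$ and ${\bf P}(B_m=n-1)=e^{-n}n^{n-1}/(n-1)!$, so by Stirling ${\bf P}(A_n=m,\,B_m=n-1)\sim(2\pi\sqrt{nm})^{-1}$.

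The unifying idea for the numerator is a local-limit/bridge analysis. Conditioning on the endpoints $A_n=m$, $B_m=n-1$ turns $(A_k)$ and $(B_k)$ into Poisson bridges; the composed process $S_k=B_{A_k}-k$ then behaves like a conditioned inhomogeneous walk whose drift I read off from $\mathds{E}S_k=\tfrac{n}{1-(1-p)^m}(1-e^{-\mu_kp})$ --- positive and increasing for small $k$, then pulled down to $-1$ by the terminal conditioning. The magnitude of this drift relative to the fluctuations is exactly what $c_n$ controls, which motivates the four-way split. Throughout I would lean on the Monotonicity Lemma to transfer one-sided bounds between nearby values of $p$ and to help localize where the dip can occur.

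For the individual regimes: when $c_n\to\infty$ the increments have large mean, so a Chernoff bound on the $X_i,Y_j$ with a union bound over $k$ gives ${\bf P}(\tau<n\mid\cdot)\to0$; since also $(1-(1-p)^m)\to1$ this yields $Q_{n,m}\sim 1-(1-p)^m$ and hence \eqref{Gnm_res1}. When $c_n=o(1/n)$ the rescaled increment means are $\alpha_i\approx m/n$, $\beta_j\approx n/m$, so $S_k$ is an essentially \emph{driftless} bridge of length $n$ ending at $-1$; a ballot/cycle-lemma argument gives $Q_{n,m}\sim1/n$ and \eqref{Gnm_res4}. (A cleaner route for this regime is to keep only the leading $p^{n+m-1}$ terms of \eqref{Pcon}: every $(\mathbf r,\mathbf l)\in T_{n,m}$ encodes a spanning tree of $K_{n,m}$, and counting them reproduces $n^{m-1}m^{n-1}p^{n+m-1}$.) When $c_n=o(1)$ but $c_n\sqrt n/\ln n\to\infty$ the same bridge carries a small positive drift $\mathds{E}S_k\approx kp$; integrating this drift against the driftless staying-positive density produces the prefactor $\tfrac{c_n}{2}$, the growth condition on $c_n$ ensuring the endpoint corrections are of lower order. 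Finally, when $c_n\to c$ the drift and the fluctuations are of the same order, $Q_{n,m}$ tends to a constant, and the correction $1-\alpha_n\beta_m$ comes from the dominant terminal dip: $\alpha_n,\beta_m$ are asymptotically the means of the last increments $X_n,Y_m$, and $\alpha_n\beta_m$ is the leading probability of a dip forced near $k=n$ by small terminal increments.

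The main obstacle is that $S_k=B_{A_k}-k$ is \emph{not} a Markov random walk but a composition of two independent walks, so standard one-dimensional bridge and ballot estimates do not apply verbatim; one must control the joint fluctuations of $(A_k)$ and $(B_\ell)$ near both endpoints simultaneously. Equivalently one can pass to the dual formulation $\{B_k\ge V_k^A,\ k\le n\}$ with $V_k^A=\sum_i I(A_i\le k)$ and compare the two independent near-critical walks $B$ and $V^A$. The sharpest work lies in the regimes $c_n\to c$ and $c_n=o(1)$: extracting the exact constant $1-\alpha_n\beta_m$ and the prefactor $\tfrac{c_n}{2}$ requires a two-parameter local limit theorem for $(A_n,B_m)$ together with precise excursion asymptotics for the composed bridge, and a verification that dips occurring in the bulk are negligible compared with those at the two boundaries. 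This is where I expect essentially all of the technical effort to concentrate; the regimes $c_n\to\infty$ and $c_n=o(1/n)$ are comparatively routine.
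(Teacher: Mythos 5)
First, a caveat: the paper itself does not prove Theorem~\ref{main res} --- it explicitly defers the full proof to a future version --- so there is no reference argument to compare your attempt against. Your overall frame is correct and matches the machinery the paper sets up in Section~\ref{prelim}: the reduction to the conditional probability $Q_{n,m}$, the observation that $S_k$ has increments $\ge -1$ so that going negative means hitting $-1$, the exact marginals ${\bf P}(A_n=m)=e^{-m}m^m/m!$ and ${\bf P}(B_m=n-1)=e^{-n}n^{n-1}/(n-1)!$, the dual formulation via $V_k^A$, and the spanning-tree reading of regime 4) (where $n^{m-1}m^{n-1}$ is the number of spanning trees of $K_{n,m}$) are all sound.

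That said, what you have is a programme rather than a proof, and the decisive steps are exactly the ones left as heuristics. (i) In regime 2) the quantities $\alpha_n,\beta_m$ are $\Theta(1)$ constants (indeed $\alpha_n\beta_m\to 1$ as $c\to 0$), so the claim that ``$\alpha_n\beta_m$ is the leading probability of a dip forced by small terminal increments'' cannot be a first-order expansion of anything small; identifying the correction as exactly $1-\alpha_n\beta_m$ requires an exact computation for the two-level walk under the bridge conditioning (note $S_{n-1}\ge 0$ iff $Y_j=0$ for all $j>m-X_n$, which is where a product of terminal intensities can plausibly emerge, but this must actually be carried out, together with a proof that dips at $k<n-1$ contribute only to lower order). (ii) In regime 4) the cycle lemma does not apply verbatim to $S_k$: its increments are sums over random blocks of the $Y_j$ determined by the $X_i$ and are not cyclically exchangeable in the ordinary sense, so you need the two-level cyclic-shift argument underlying Scoins' formula, or else your alternative route through \eqref{Pcon}, which then additionally requires bounding the contribution of connected subgraphs with more than $n+m-1$ edges. (iii) In regime 1) a Chernoff-plus-union bound handles the bulk, but for $k$ near $n$ the conditioned walk sits only $O(1)$ above $-1$ and the dip probability there is $\Theta(\alpha_n\beta_m)$, not superpolynomially small; the boundary must be treated separately (it does vanish since $\alpha_n\beta_m\to 0$ here, but your stated tool does not show this). (iv) The prefactor $c_n/2$ in regime 3) is obtained by ``integrating the drift against the driftless staying-positive density,'' which is the right heuristic but not a derivation, and your sketch never uses the hypothesis $c_n\sqrt{n}/\ln n\to+\infty$, which must enter to control the error terms. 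Until these points are turned into estimates, the constants in \eqref{Gnm_res2} and \eqref{Gnm_res3} in particular remain unverified.
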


\section{Conclusion} \label{conclusion}

In this work, we proposed a new approach for analyzing the connectivity probability of random bipartite graphs $G(n,m,p)$, based on a sequential vertex exploration process. The main result is a nonasymptotic representation of the connectivity probability in terms of the conditional probability that the process $S_k = B_{A_k} - k$, built from two inhomogeneous random walks, remains nonnegative. This representation provides a convenient tool for deriving exact asymptotics under various regimes of the parameter~$p$.

We formulated a theorem describing the asymptotic behavior of the connectivity probability as $n, m \to \infty$ under several scaling regimes of $p = p(n,m)$. Although the complete proof will be included in a future version, the results presented — particularly the proven lemma — have independent value and can serve as a basis for further work. 
Possible directions for future research include:
\begin{itemize}
  \item A more detailed analysis of the structure of bipartite graphs under different relations between $n$ and $m$.
  \item The development of algorithms for efficiently generating connected bipartite graphs.
  \item The extension of the proposed method to random hypergraphs, multipartite graphs, or models with edge constraints.
\end{itemize}
We hope that the obtained results will prove useful both in the theoretical analysis of random graphs and in practical problems of modeling and analyzing network structures.


\bibliography{lit2_eng}

\end{document}